\documentclass{amsart}
\usepackage[dvips]{graphicx}
\usepackage{amssymb}

\newtheorem{theorem}{Theorem}

\newtheorem{lemma}[theorem]{Lemma}
\newtheorem{proposition}[theorem]{Proposition}

\theoremstyle{remark}

\newtheorem{remark}[theorem]{\sc Remark}
\newtheorem{example}[theorem]{\sc Example}
\newtheorem*{acknowledgment}{\sc Acknowledgment}

\numberwithin{equation}{section}

\begin{document}

\title{Toric Fano varieties associated to graph cubeahedra}

\author{Yusuke Suyama}
\address{Department of Mathematics, Graduate School of Science, Osaka University,
1-1 Machikaneyama-cho, Toyonaka, Osaka 560-0043 JAPAN}
%\curraddr{}
\email{y-suyama@cr.math.sci.osaka-u.ac.jp}
%\thanks{}

\subjclass[2010]{Primary 14M25; Secondary 14J45, 52B20, 05C30.}

\keywords{toric Fano varieties, toric weak Fano varieties, graph cubeahedra.}

\date{\today}

%\dedicatory{}

\begin{abstract}
We give a necessary and sufficient condition for the nonsingular projective toric variety
associated to the graph cubeahedron of a finite simple graph
to be Fano or weak Fano in terms of the graph.
\end{abstract}

\maketitle

\section{Introduction}

An $n$-dimensional {\it toric variety} is a normal algebraic variety $X$ over $\mathbb{C}$
containing the algebraic torus $(\mathbb{C}^*)^n$ as an open dense subset,
such that the natural action of $(\mathbb{C}^*)^n$ on itself extends to an action on $X$,
where $\mathbb{C}^*=\mathbb{C} \setminus \{0\}$.
It is well known that the category of toric varieties is equivalent to the category of fans.

An $n$-dimensional simple convex polytope in $\mathbb{R}^n$
is called a {\it Delzant polytope} if for every vertex,
the outward-pointing primitive normal vectors of the facets containing the vertex
form a basis for $\mathbb{Z}^n$.
The toric variety corresponding to the normal fan of a Delzant polytope
is nonsingular and projective.
Carr and Devadoss \cite{CD} introduced {\it graph associahedra} for finite simple graphs.
A graph associahedron can be realized as a Delzant polytope in a canonical way.
Graph associahedra include many important families of polytopes
such as associahedra (or Stasheff polytopes),
cyclohedra (or Bott--Taubes polytopes), stellohedra and permutohedra,
and toric varieties associated to graph associahedra
are special cases of wonderful models of subspace arrangements
introduced by De Concini and Procesi \cite{DP}.
On the other hand,
Devadoss--Heath--Vipismakul \cite{DHV} introduced {\it graph cubeahedra} for graphs
and showed that graph associahedra and graph cubeahedra
appear as some compactified moduli spaces of marked bordered Riemann surfaces.
A graph cubeahedron can also be realized as a Delzant polytope.

It is natural to ask how geometric properties of the toric variety
associated to a graph associahedron or a graph cubeahedron
translate into properties of the graph.
The rational Betti numbers of the real toric manifold,
the set of real points in the associated toric variety,
are computed in \cite{CP} for a graph associahedron,
and in \cite{PPP} for a graph cubeahedron.
A nonsingular projective variety is said to be {\it Fano} (resp.\ {\it weak Fano})
if its anticanonical divisor is ample (resp.\ nef and big).
The author \cite{Suyama} characterized finite simple graphs
whose graph associahedra yield toric Fano or toric weak Fano varieties.
In this paper, we give a necessary and sufficient condition
for the toric variety associated to the graph cubeahedron
to be Fano or weak Fano in terms of the graph
(see Theorems \ref{main1} and \ref{main2}).
The proofs are purely combinatorial.

The structure of the paper is as follows:
In Section 2, we review the construction of a graph cubeahedron
and describe its normal fan.
In Section 3, we characterize finite simple graphs
whose graph cubeahedra yield toric Fano or toric weak Fano varieties,
and give a few examples.
We prove our theorems in Section 4.
In this paper, we denote by $X(\Delta)$ the toric variety corresponding to a fan $\Delta$,
and $\Delta_P$ the normal fan of a Delzant polytope $P$.

\begin{acknowledgment}
This work was supported by Grant-in-Aid for JSPS Fellows 15J01000, 18J00022.
The author wishes to thank Professors Mikiya Masuda and Seonjeong Park
for their valuable comments.
\end{acknowledgment}

\section{Toric varieties associated to graph cubeahedra}

A {\it finite simple graph} is a finite graph $G$
on the node set $V(G)=\{1, \ldots, n\}$ with no loops and no multiple edges.
We denote by $E(G)$ its edge set.
For $I \subset V(G)$, the {\it induced subgraph} $G|_I$ is defined
by $V(G|_I)=I$ and $E(G|_I)=\{\{v, w\} \in E(G) \mid v, w \in I\}$.

Let $\Box^n$ be the standard $n$-dimensional cube whose facets are labeled by
$1, \ldots, n$ and $\overline{1}, \ldots, \overline{n}$,
where the two facets labeled by $i$ and $\overline{i}$ are on opposite sides.
Every face of $\Box^n$ is labeled by a subset
$I \subset \{1, \ldots, n, \overline{1}, \ldots, \overline{n}\}$ such that
$I \cap \{1, \ldots, n\}$ and $\{i \in \{1, \ldots, n\} \mid \overline{i} \in I\}$ are disjoint.
The face corresponding to $I$ is the intersection of the facets
labeled by the elements of $I$.
Let $\mathcal{I}_G=\{I \subset V(G) \mid G|_I \mbox{ is connected}, I \ne \emptyset\}$.
The {\it graph cubeahedron} $\Box_G$ is obtained from $\Box^n$
by truncating the faces labeled by the elements of $\mathcal{I}_G$
in increasing order of dimension.
The following lemma implies that $\Box_G$ can be realized as a Delzant polytope.
In particular, the associated toric variety
$X(\Delta_{\Box_G})$ is nonsingular and projective.

\begin{lemma}[{\cite[Lemma 2.5]{CPP}}]
Let $P$ be a Delzant polytope and let $F$ be a face of codimension $\geq 2$ of $P$.
Then there exists a canonical truncation of $P$ along $F$
such that the result $\mathrm{Cut}_F(P)$ is also a Delzant polytope
and the associated toric variety $X(\Delta_{\mathrm{Cut}_F(P)})$
is the blow-up of $X(\Delta_P)$ along the subvariety corresponding to $F$.
\end{lemma}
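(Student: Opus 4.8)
The plan is to pass to normal fans and invoke the standard dictionary between toric blow-ups and star subdivisions of nonsingular fans. Write $P = \{x \in \mathbb{R}^n : \langle x, u_j \rangle \le b_j \ (j = 1, \ldots, m)\}$, where $u_1, \ldots, u_m \in \mathbb{Z}^n$ are the primitive outward normals of the facets of $P$, and arrange the labelling so that $F = \{x \in P : \langle x, u_j\rangle = b_j \ (j = 1, \ldots, k)\}$ with $k = \mathrm{codim}\,F \ge 2$. Since $P$ is Delzant, the cone $\sigma_F$ of $\Delta_P$ corresponding to $F$ is generated by $u_1, \ldots, u_k$, and these vectors form part of a $\mathbb{Z}$-basis; in particular $u_0 := u_1 + \cdots + u_k$ is again primitive. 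Because $\langle x, u_0\rangle \le b_1 + \cdots + b_k =: b_0$ on $P$, with equality precisely on $F$, I would set $\mathrm{Cut}_F(P) = P \cap \{x : \langle x, u_0\rangle \le b_0 - \varepsilon\}$ and first check, by a compactness argument, that for all sufficiently small $\varepsilon > 0$ the new hyperplane meets only the facets with normals $u_1, \ldots, u_k$ and excises exactly a neighbourhood of $F$; one then checks that the combinatorial type and the normal fan of the result do not depend on such $\varepsilon$, which is what makes the truncation canonical.

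Next I would verify that $\mathrm{Cut}_F(P)$ is again Delzant. As $u_0$ lies in the relative interior of the smooth cone $\sigma_F$, the new facet $F_0 = \{x \in P : \langle x, u_0\rangle = b_0 - \varepsilon\}$ is combinatorially a product of $F$ with a $(k-1)$-simplex, so every new vertex lies over a vertex $v$ of $F$. If $v$ is cut out by the facets with normals $u_1, \ldots, u_k, u_{j_1}, \ldots, u_{j_{n-k}}$, which form a $\mathbb{Z}$-basis, then the new vertex over $v$ is cut out by $u_0$ together with all but one of $u_1, \ldots, u_k$ and by $u_{j_1}, \ldots, u_{j_{n-k}}$; since $u_0 = u_1 + \cdots + u_k$, the passage from the first basis to this new set of $n$ vectors is realized by a unimodular matrix, so the new set is again a $\mathbb{Z}$-basis. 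The vertices of $P$ lying off $F$ survive unchanged, so $\mathrm{Cut}_F(P)$ is Delzant.

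Finally I would identify $\Delta_{\mathrm{Cut}_F(P)}$ with the star subdivision of $\Delta_P$ at $\sigma_F$. Inspecting the faces of $\mathrm{Cut}_F(P)$ one sees that the new facet contributes the ray $\mathbb{R}_{\ge 0}u_0$; every cone of $\Delta_P$ not containing $\sigma_F$ — which corresponds to a face of $P$ untouched by the truncation — survives unchanged; and every cone $\tau = \langle u_1, \ldots, u_k, u_{j_1}, \ldots\rangle$ containing $\sigma_F$ — which corresponds to a face of $P$ contained in $F$ — is replaced by the cones $\langle u_0, u_1, \ldots, \widehat{u_i}, \ldots, u_k, u_{j_1}, \ldots\rangle$ for $i = 1, \ldots, k$, that is, by its star subdivision. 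Hence $\Delta_{\mathrm{Cut}_F(P)}$ is exactly the star subdivision of $\Delta_P$ at $\sigma_F$, and since the toric variety of the star subdivision of a nonsingular fan at a cone $\sigma$ is the blow-up of $X(\Delta_P)$ along the orbit closure $V(\sigma)$, it follows that $X(\Delta_{\mathrm{Cut}_F(P)})$ is the blow-up of $X(\Delta_P)$ along the torus-invariant subvariety corresponding to $F$.

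The step I expect to be the main obstacle is the bookkeeping in the last paragraph: enumerating precisely which faces of $P$ survive and which are created by the cut, tracking the incidences among the new faces, and matching all of this with the cones of the star subdivision — in particular, confirming that no cone outside the star of $\sigma_F$ is affected, which is exactly the point at which the smallness of $\varepsilon$ is used. By contrast, the Delzant check and the appeal to the blow-up/star-subdivision dictionary are routine once the combinatorics of the truncation has been pinned down.
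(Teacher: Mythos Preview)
The paper does not prove this lemma at all; it merely quotes it from \cite[Lemma 2.5]{CPP} and uses it as a black box, so there is no ``paper's own proof'' to compare against. Your outline is the standard argument and is correct: define the truncation by the half-space $\langle x,u_0\rangle\le b_0-\varepsilon$ with $u_0=u_1+\cdots+u_k$, verify the Delzant condition at the new vertices via the unimodular change of basis, and identify the resulting normal fan with the star subdivision of $\Delta_P$ at $\sigma_F$, which is well known to correspond to the toric blow-up along $V(\sigma_F)$.
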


\begin{example}
Let $P_n$ be a path with $n$ nodes.
Then the graph cubeahedra $\Box_{P_2}$ and $\Box_{P_3}$ are illustrated in Figure \ref{gcp}.
\begin{figure}[htbp]
\begin{center}
\includegraphics[width=3cm]{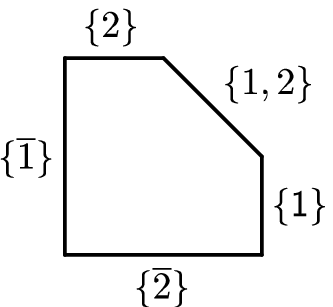}
\hspace{2cm}
\includegraphics[width=3cm]{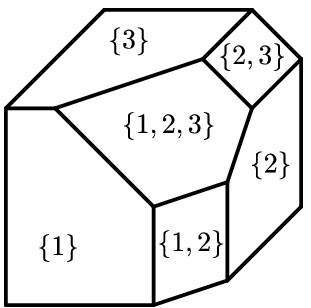}
\caption{the graph cubeahedra $\Box_{P_2}$ and $\Box_{P_3}$.}
\label{gcp}
\end{center}
\end{figure}
\end{example}

For a finite simple graph $G$ on $V(G)=\{1, \ldots, n\}$,
we have a bijection between the set of facets of $\Box_G$
and $\mathcal{I}_G \cup \{\{\overline{1}\}, \ldots, \{\overline{n}\}\}$.
We denote by $F_I$ the facet corresponding to
$I \in \mathcal{I}_G \cup \{\{\overline{1}\}, \ldots, \{\overline{n}\}\}$.
The outward-pointing primitive normal vector $e_I$ of $F_I$ is given by
\begin{equation*}
e_I=\left\{\begin{array}{ll}
\sum_{i \in I}e_i & (I \in \mathcal{I}_G), \\
-e_i & (I=\{\overline{i}\}, i \in \{1, \ldots, n\}). \end{array}\right.
\end{equation*}

\begin{theorem}[{\cite[Theorem 12]{DHV}}]
Let $G$ be a finite simple graph.
Then the two facets $F_I$ and $F_J$ of the graph cubeahedron $\Box_G$ intersect
if and only if one of the following holds:
\begin{enumerate}
\item $I, J \in \mathcal{I}_G$ and we have either
$I \subset J$ or $J \subset I$ or $I \cup J \notin \mathcal{I}_G$.
\item One of $I$ and $J$, say $I$, is in $\mathcal{I}_G$ and $J=\{\overline{j}\}$
for some $j \in \{1, \ldots, n\} \setminus I$.
\item $I=\{\overline{i}\}$ and $J=\{\overline{j}\}$ for some $i, j \in \{1, \ldots, n\}$.
\end{enumerate}
Furthermore, $\Box_G$ is a flag polytope, that is,
any collection of its pairwise intersecting facets has a nonempty intersection.
\end{theorem}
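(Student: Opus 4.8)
The plan is to determine the face poset of $\Box_G$ by induction along the chain of truncations that builds it from $\Box^n$, to read the facet-incidence relation off that poset, and then to deduce the flag property from the combinatorial shape of the answer.

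First I would record the behaviour of a truncation. If $P$ is a simple polytope and $F$ a face of codimension $k\ge 2$, then $\mathrm{Cut}_F(P)$ is $P$ with a small neighbourhood of $F$ shaved off: its faces are the images $\widehat Q$ of the faces $Q$ of $P$ with $Q\not\subseteq F$, together with the faces of the new facet $E\cong F\times\Delta^{k-1}$. Using simplicity of $P$ (every facet meeting $F$ either contains $F$ or meets it in a facet of $F$) one obtains the two rules that will drive the induction: $\widehat Q\cap\widehat Q'\ne\emptyset$ if and only if $Q\cap Q'\ne\emptyset$ and $Q\cap Q'\not\subseteq F$, and $E\cap\widehat Q\ne\emptyset$ if and only if $F\cap Q\ne\emptyset$.

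Next, enumerate the members $I_1,\dots,I_N$ of $\mathcal I_G$ with $|I_\ell|\ge 2$ in non-increasing order of cardinality (truncating along a facet, the case $|I|=1$, does nothing and is omitted), put $P_0=\Box^n$, and let $P_m$ be the truncation of $P_{m-1}$ along the face $\bigcap_{i\in I_m}F_{\{i\}}$, so that $\Box_G=P_N$. The claim, proved by induction on $m$, is that the faces of $P_m$ are in bijection with the pairs $(\mathcal N,B)$ in which $\mathcal N$ is a nested subfamily of the building set $\mathcal B_m=\{\{1\},\dots,\{n\},I_1,\dots,I_m\}$ and $B\subseteq\{1,\dots,n\}$ satisfies $B\cap I=\emptyset$ for every $I\in\mathcal N$, the face attached to $(\mathcal N,B)$ being $\bigcap_{I\in\mathcal N}F_I\cap\bigcap_{j\in B}F_{\{\overline j\}}$; in particular $\bigcap_{i\in I_{m+1}}F_{\{i\}}$ really is a face of $P_m$, of codimension $|I_{m+1}|\ge 2$, so the construction is well defined. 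The base case $m=0$ is the description of the faces of $\Box^n$ as $\bigcap_{i\in A}F_{\{i\}}\cap\bigcap_{j\in B}F_{\{\overline j\}}$ for disjoint $A,B\subseteq\{1,\dots,n\}$, together with the fact that every subfamily of $\mathcal B_0$ is nested. For the inductive step one truncates along $F=\bigcap_{i\in I_{m+1}}F_{\{i\}}$ and applies the two rules to match the faces of $P_{m+1}$ with the pairs for $\mathcal B_{m+1}=\mathcal B_m\cup\{I_{m+1}\}$; one also checks that the outcome is independent of the order chosen among sets of equal cardinality. Carrying out this matching — describing how the admissible pairs change when a single new generator $I_{m+1}$, with $G|_{I_{m+1}}$ connected, is adjoined to the building set — is the combinatorial heart of the argument and the step I expect to be the main obstacle.

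Taking $m=N$ gives the face poset of $\Box_G$. The asserted criterion for $F_I\cap F_J\ne\emptyset$ is exactly its restriction to faces with $|\mathcal N|+|B|=2$, written out according to which of $I$ and $J$ lie in $\mathcal I_G$ and which have the form $\{\overline j\}$. For the flag property, note that every condition defining an admissible pair $(\mathcal N,B)$ is imposed on a single member or on a pair of members — including the nestedness of $\mathcal N$, which for a \emph{graphical} building set reduces to its two-element instances: if $N_1,\dots,N_k\in\mathcal N$ are pairwise incomparable, then each pair among them is disjoint with no edge of $G$ joining its two parts, so $G|_{N_1\cup\dots\cup N_k}$ has at least two connected components and hence $N_1\cup\dots\cup N_k\notin\mathcal I_G$. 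Consequently any pairwise intersecting family of facets of $\Box_G$, which is necessarily of the form $\{F_I:I\in\mathcal N\}\cup\{F_{\{\overline j\}}:j\in B\}$, automatically meets all the conditions making $(\mathcal N,B)$ admissible, so it is a face of $\Box_G$ and in particular has nonempty intersection; thus $\Box_G$ is flag.
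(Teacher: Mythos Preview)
The paper does not prove this statement; it is quoted from \cite[Theorem~12]{DHV} and used as a black box, so there is no proof in the paper to compare your argument against.

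That said, your outline is a sound and essentially standard route to the result. Tracking the face poset through the chain of truncations and identifying faces with pairs $(\mathcal N,B)$ --- a nested family in the building set together with a set of barred indices disjoint from every member of $\mathcal N$ --- is exactly how one expects to describe the boundary complex of $\Box_G$, and your reduction of the flag property to the observation that nestedness for a \emph{graphical} building set is detected on pairs is correct and is the crucial point. The truncation rules you record for a simple polytope are right, and because the $I_\ell$ are added in non-increasing order of cardinality the intermediate family $\mathcal B_m$ is itself a building set (if $I_j,I_k\in\mathcal B_m$ meet, then $I_j\cup I_k\in\mathcal I_G$ has strictly larger cardinality and so was already listed), so the inductive description stays coherent. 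The step you flag as the main obstacle --- checking that one further truncation changes the admissible pairs in precisely the claimed way --- is indeed where the work lies, but there is no hidden difficulty beyond careful bookkeeping.
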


We describe the normal fan $\Delta_{\Box_G}$ of $\Box_G$ explicitly. Let
\begin{equation*}
\mathcal{N}^\Box(G)=\{N \subset \mathcal{I}_G
\cup \{\{\overline{1}\}, \ldots, \{\overline{n}\}\} \mid
F_I \mbox{ and } F_J \mbox{ intersect for any } I, J \in N\}.
\end{equation*}
For $N \in \mathcal{N}^\Box(G) \setminus \{\emptyset\}$,
we denote by $\sigma_N$ the $|N|$-dimensional cone
$\sum_{I \in N}\mathbb{R}_{\geq 0}e_I$ in $\mathbb{R}^n$,
where $\mathbb{R}_{\geq 0}$ is the set of nonnegative real numbers,
and we define $\sigma_\emptyset$ to be $\{0\} \subset \mathbb{R}^n$.
Then $\{\sigma_N \mid N \in \mathcal{N}^\Box(G)\}$
is the normal fan $\Delta_{\Box_G}$.
Note that if $G_1, \ldots, G_m$ are the connected components of $G$,
then $X(\Delta_{\Box_G})$ is isomorphic to the product
$X(\Delta_{\Box_{G_1}}) \times \cdots \times X(\Delta_{\Box_{G_m}})$.

\begin{example}\label{p2}
The normal fan of the graph cubeahedron $\Box_{P_2}$ of a path $P_2$
is illustrated in Figure \ref{xp2} and thus the associated toric variety
is $\mathbb{P}^1 \times \mathbb{P}^1$ blown-up at one point.
\begin{figure}[htbp]
\begin{center}
\includegraphics[width=6cm]{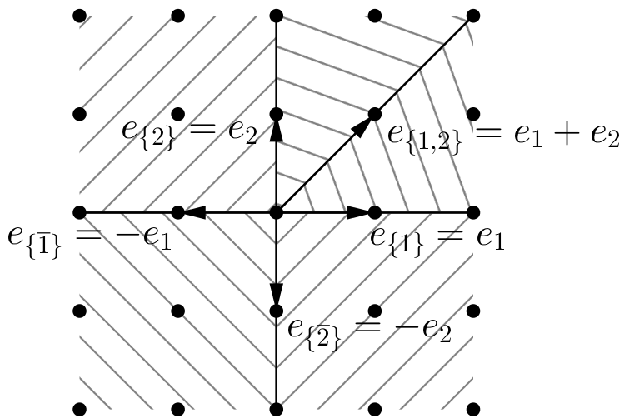}
\caption{the normal fan of the graph cubeahedron $\Box_{P_2}$.}
\label{xp2}
\end{center}
\end{figure}
\end{example}

\section{Main theorems}

Our main results are the following:

\begin{theorem}\label{main1}
Let $G$ be a finite simple graph. Then the following are equivalent:
\begin{enumerate}
\item The nonsingular projective toric variety $X(\Delta_{\Box_G})$
associated to the graph cubeahedron $\Box_G$ is Fano.
\item Every connected component of $G$ has at most two nodes.
\end{enumerate}
In particular, any toric Fano variety associated to a graph cubeahedron
is a product of copies of $\mathbb{P}^1$
and $\mathbb{P}^1 \times \mathbb{P}^1$ blown-up at one point.
\end{theorem}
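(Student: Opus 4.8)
The plan is to reduce at once to the case of a connected graph. Since $X(\Delta_{\Box_G})$ is the product of the varieties $X(\Delta_{\Box_{G_i}})$ attached to the connected components $G_i$ of $G$, and a product of nonsingular projective varieties is Fano if and only if each factor is Fano, one may assume $G$ is connected and must then prove that $X(\Delta_{\Box_G})$ is Fano if and only if $|V(G)|\le 2$. For the ``if'' direction the only connected graphs involved are $P_1$ and $P_2$: the normal fan of $\Box_{P_1}=\Box^1$ is the fan of $\mathbb{P}^1$, and by Example \ref{p2} the normal fan of $\Box_{P_2}$ is that of $\mathbb{P}^1\times\mathbb{P}^1$ blown up at a torus-fixed point. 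Both varieties are Fano (in the second case one checks this directly from its ray generators $e_1,e_2,e_1+e_2,-e_1,-e_2$: on each of the five maximal cones the anticanonical support vector pairs to $<1$ with the remaining generators). Taking products of copies of $\mathbb{P}^1$ and of $\mathbb{P}^1\times\mathbb{P}^1$ blown up at a point then gives both (1) and the final ``in particular'' assertion.

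For the ``only if'' direction I would prove the contrapositive: if $G$ is connected with $n:=|V(G)|\ge 3$, then $X(\Delta_{\Box_G})$ is not Fano. Such a $G$ has a node $v$ of degree $\ge 2$ (otherwise every node would have degree $\le 1$, impossible for a connected graph on $\ge 3$ nodes). Fix two neighbors $a,b$ of $v$, put $R=V(G)\setminus\{v,a,b\}$, and consider
\[
N=\bigl\{\{v\},\ \{v,a\},\ \{v,a,b\}\bigr\}\cup\bigl\{\{\overline r\}\mid r\in R\bigr\}.
\]
Since $v$ is adjacent to $a$ and to $b$, the sets $\{v\},\{v,a\},\{v,a,b\}$ lie in $\mathcal{I}_G$, and using \cite[Theorem 12]{DHV} one checks that the facets $F_I$, $I\in N$, pairwise intersect: the three sets in $\mathcal{I}_G$ form a chain (case (1)); between one of them and any $\{\overline r\}$ case (2) applies because $r\notin\{v,a,b\}$; between two sets $\{\overline r\}$ case (3) applies. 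Hence $N\in\mathcal{N}^\Box(G)$, and since $|N|=n$ while the corresponding rays $e_v$, $e_v+e_a$, $e_v+e_a+e_b$, and $-e_r$ $(r\in R)$ form a $\mathbb{Z}$-basis, $\sigma_N$ is a full-dimensional, hence maximal, cone of $\Delta_{\Box_G}$.

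Finally I would invoke the standard criterion that a nonsingular complete toric variety is Fano if and only if, for every maximal cone $\sigma$ with primitive ray generators $u_1,\dots,u_n$ and $m_\sigma$ the unique vector satisfying $\langle m_\sigma,u_i\rangle=1$ for all $i$, one has $\langle m_\sigma,u\rangle<1$ for every ray generator $u$ of the fan not among $u_1,\dots,u_n$. Solving the defining equations for $\sigma_N$ yields $m_{\sigma_N}=e_v^*-\sum_{r\in R}e_r^*$, where $e_1^*,\dots,e_n^*$ is the dual basis. Because $v$ is adjacent to $b$, the set $\{v,b\}$ lies in $\mathcal{I}_G$, so $e_{\{v,b\}}=e_v+e_b$ is a ray generator of $\Delta_{\Box_G}$; it is distinct from every generator of $\sigma_N$ (in fact $e_v+e_b\notin\sigma_N$, since in any nonnegative combination the vanishing coefficient of $e_a$ forces the coefficients of $e_v+e_a$ and $e_v+e_a+e_b$ to vanish, so $e_b$ cannot be produced), whereas $\langle m_{\sigma_N},e_v+e_b\rangle=1\not<1$. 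This violates the criterion, so $X(\Delta_{\Box_G})$ is not Fano, completing the contrapositive. The only points requiring care are the case-check that makes $\sigma_N$ a cone and the verification that $e_{\{v,b\}}$ is a genuine ray distinct from those of $\sigma_N$; neither presents a real obstacle once a node $v$ of degree $\ge 2$ is fixed, and the degenerate case $R=\emptyset$ (i.e.\ $n=3$) is covered by the same computation.
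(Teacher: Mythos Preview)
Your argument is correct. The $(2)\Rightarrow(1)$ direction matches the paper's exactly. For $(1)\Rightarrow(2)$ both proofs rest on the same linear relation $e_{\{v,a\}}+e_{\{v,b\}}=e_{\{v\}}+e_{\{v,a,b\}}$ around a node $v$ of degree $\ge 2$, but they package it differently. The paper does not reduce to the connected case: it takes a connected component $\{1,\dots,k\}$ with $k\ge 3$, builds the $(n-1)$-dimensional cone on $\{\{2\},\{1,2,3\},\{1,2,3,4\},\dots,\{1,\dots,k\},\{\overline{k+1}\},\dots,\{\overline n\}\}$ (the chain $\{1,\dots,i\}$ is needed precisely because one cannot use $\{\overline i\}$ for $i$ inside the component once $\{1,\dots,k\}$ is present), and then applies Propositions~\ref{intersection number} and~\ref{Nakai} to obtain $(-K.V(\sigma_N))=0$. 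You instead reduce to connected $G$ first, which lets you cover all remaining nodes with $\{\overline r\}$'s and work with a single maximal cone, and you invoke the equivalent support-function (anticanonical polytope) criterion for ampleness rather than the curve-intersection criterion. Your reduction makes the cone simpler to write down; the paper's choice keeps everything inside the intersection-number framework that is reused in the proof of Theorem~\ref{main2}. Either way, with your labeling $(v,a,b)=(2,1,3)$ your maximal cone is exactly the paper's $N\cup\{\{1,2\}\}$ (for $k=3$), and your ``bad'' ray $e_{\{v,b\}}=e_{\{2,3\}}$ is the generator of the adjacent maximal cone in the paper's computation.
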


\begin{theorem}\label{main2}
Let $G$ be a finite simple graph. Then the following are equivalent:
\begin{enumerate}
\item The toric variety $X(\Delta_{\Box_G})$ is weak Fano.
\item For any subset $I$ of $V(G)$,
the induced subgraph $G|_I$ is not isomorphic to any of the following:
\begin{enumerate}
\item A cycle with $\geq 4$ nodes.
\item A diamond graph, that is,
the graph obtained by removing an edge from a complete graph with four nodes
(see Figure \ref{diamond}).
\item A claw, that is, a star with three edges.
\end{enumerate}
\end{enumerate}
In particular, if $X(\Delta_{\Box_G})$ is weak Fano, then $G$ is chordal.
\end{theorem}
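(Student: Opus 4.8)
The plan is to reduce the problem to a finite check on obstruction subgraphs by exploiting the combinatorial description of $\Delta_{\Box_G}$ given in Section 2. Recall that a nonsingular projective toric variety $X(\Delta)$ is weak Fano precisely when, for every maximal cone $\sigma$ of $\Delta$ with primitive generators $v_1,\dots,v_n$, the point $v_1+\cdots+v_n$ lies in $\sigma$; and it is Fano when it lies in the interior of $\sigma$ relative to the full fan (equivalently, the anticanonical polytope is the convex hull of the "ray duals" and its vertices correspond bijectively to maximal cones). More convenient here is the primitive collection / primitive relation criterion of Batyrev: $X(\Delta_{\Box_G})$ is weak Fano iff for every primitive collection $\{u_1,\dots,u_k\}$ the primitive relation $u_1+\cdots+u_k = \sum_j a_j w_j$ (with $\{w_j\}$ spanning a cone) satisfies $k - \sum_j a_j \ge 0$, and Fano iff strict inequality always holds. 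Since the rays of $\Delta_{\Box_G}$ are the explicit vectors $e_I$ ($I\in\mathcal I_G$) and $-e_i$, the whole analysis is a bookkeeping exercise on subsets of $V(G)$.

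First I would carry out the "only if" directions by exhibiting, for each of the three forbidden induced subgraphs (a cycle $C_m$ with $m\ge 4$, the diamond, and the claw), an explicit primitive collection among the $e_I$ and $-e_i$ whose primitive relation violates $k-\sum a_j\ge 0$; because $X(\Delta_{\Box_{G}})$ for $G$ with those as induced subgraphs retracts (via the product decomposition and the combinatorics of $\mathcal I_{G|_I}\subset\mathcal I_G$) onto the corresponding lower-dimensional situation, a bad relation upstairs survives. Concretely, for a claw one checks the collection $\{e_{\{a\}},e_{\{b\}},e_{\{c\}}\}$ where $a,b,c$ are the leaves and the center is $d$: their sum equals $e_{\{a,d\}}+e_{\{b,d\}}+e_{\{c,d\}}-2e_{\{d\}}$ or a similar combination that, after checking which vectors span a common cone via Theorem (DHV), yields $k-\sum a_j<0$. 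Analogous explicit relations handle $C_m$ and the diamond. Simultaneously this shows that for Theorem \ref{main1}, a connected component with $\ge 3$ nodes always contains either a path $P_3$ or a triangle as induced subgraph, and in either case one produces a primitive relation with $k-\sum a_j = 0$ (not $<0$), killing Fano but consistent with weak Fano — this is exactly why the Fano classification is so much more restrictive than the weak Fano one.

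For the "if" direction of Theorem \ref{main2}, I would assume $G$ has none of the three forbidden induced subgraphs and show every primitive relation satisfies $k-\sum a_j\ge 0$. The key structural input is that such $G$ is chordal and moreover every induced subgraph on a connected vertex set is "close to" a clique tree; I would classify primitive collections into types according to how many $-e_i$'s they contain. Collections of the form $\{-e_{i_1},\dots,-e_{i_k}\}$ are harmless (the relation is trivial). The substantive cases are collections of $e_I$'s, possibly with one $-e_i$: using Theorem (DHV)'s intersection criterion, a primitive collection of sets $I_1,\dots,I_k\in\mathcal I_G$ is a minimal family that is pairwise "compatible" but whose union behaves badly, and the absence of a claw / diamond / long cycle forces the union $I_1\cup\cdots\cup I_k$ to again be connected or to split in a controlled way, so that $e_{I_1}+\cdots+e_{I_k}$ can be written as $\sum_j e_{J_j}$ with the $J_j$ pairwise compatible and $\sum_j|\text{coefficient}| \le k$. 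For Theorem \ref{main1}'s "if" direction one additionally observes that when all components have $\le 2$ nodes, $\mathcal I_G$ is tiny and $X(\Delta_{\Box_G})$ is literally a product of $\mathbb P^1$'s and of $\mathrm{Bl}_{\mathrm{pt}}(\mathbb P^1\times\mathbb P^1)$, both Fano.

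The main obstacle I anticipate is the bookkeeping in the "if" direction of Theorem \ref{main2}: one must enumerate all primitive collections of the flag fan $\Delta_{\Box_G}$ and verify the inequality uniformly, and the primitive collections mixing positive rays $e_I$ with a negative ray $-e_i$ are delicate because the relation can have a $+1$ on the $-e_i$ side. The cleanest route is probably to avoid primitive relations entirely and instead argue directly on the anticanonical polytope: show that for each maximal cone $\sigma_N$ (which, by flagness, corresponds to a maximal pairwise-intersecting family $N$), the vector $\sum_{I\in N}e_I$ lies in $\sigma_N$, equivalently that the dual "vertex" it defines satisfies all the facet inequalities $\langle\,\cdot\,,e_J\rangle\le 1$ for $J\notin N$ — and then translate "this fails" into the presence of one of the three forbidden configurations. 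I expect the forbidden-configuration extraction (the contrapositive bookkeeping) to be where essentially all the work lies, and where the three cases in (ii) arise naturally as the minimal ways a bad facet inequality can occur.
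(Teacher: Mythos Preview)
Your overall strategy via primitive relations is not wrong, but you are missing the one structural fact that makes it tractable: since $\Box_G$ is a flag polytope, every primitive collection of $\Delta_{\Box_G}$ has exactly two elements. Hence the Batyrev criterion reduces to checking, for each pair $\{J,J'\}$ of rays whose facets do not meet, whether the relation $e_J+e_{J'}=\sum_I a_I e_I$ (with the $e_I$ spanning a cone) has $\sum a_I\le 2$. This is precisely the wall-crossing computation the paper performs via Proposition~\ref{intersection number}, so once flagness is invoked the two approaches coincide.

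Because you do not use flagness, your concrete claw example is incorrect on two counts. First, $\{e_{\{a\}},e_{\{b\}},e_{\{c\}}\}$ is \emph{not} a primitive collection: the leaves $a,b,c$ are pairwise non-adjacent in $G$, so each union $\{a,b\},\{a,c\},\{b,c\}$ is disconnected and hence not in $\mathcal I_G$; thus $F_{\{a\}},F_{\{b\}},F_{\{c\}}$ pairwise intersect and, by flagness, span a cone. Second, the identity you write is false: $e_{\{a,d\}}+e_{\{b,d\}}+e_{\{c,d\}}-2e_{\{d\}}=e_a+e_b+e_c+e_d\ne e_a+e_b+e_c$. The correct bad pair for the claw (with centre $d$) is $\{e_{\{a,b,c,d\}},\,-e_d\}$, whose sum equals $e_{\{a\}}+e_{\{b\}}+e_{\{c\}}$ and gives degree $2-3=-1$. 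Analogous explicit size-two pairs handle the long cycle and the diamond; these are exactly the computations in the paper's $(1)\Rightarrow(2)$ argument.

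For $(2)\Rightarrow(1)$ your outline is too vague to count as a proof, and it omits the one genuinely nontrivial ingredient. Given a bad pair $\{J,J'\}$ with both $J,J'\in\mathcal I_G$, one shows that $J\cup J'$ and every connected component of $G|_{J\cap J'}$ must already lie among the generators of the adjacent maximal cone; the resulting relation then forces $G|_{J\cap J'}$ to be disconnected. At this point one needs a separate combinatorial lemma (Lemma~\ref{cyclediamond} in the paper): if two connected induced subgraphs have nonempty disconnected intersection, then $G$ contains an induced cycle of length $\ge 4$ or an induced diamond. This extraction step is where the real graph theory lies, and nothing in your proposal corresponds to it. The remaining case $J'=\{\overline{j}\}$ yields, by a similar analysis, a partition of $J\setminus\{j\}$ into at least three connected pieces each adjacent to $j$, from which an induced claw is read off directly; your sketch does not isolate this mechanism either.
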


\begin{figure}[htbp]
\begin{center}
\includegraphics[width=3cm]{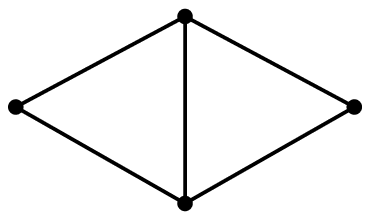}
\caption{a diamond graph.}
\label{diamond}
\end{center}
\end{figure}

\begin{example}
\begin{enumerate}
\item If $G$ is a path, then the toric variety $X(\Delta_{\Box_G})$ is weak Fano.
\item The {\it graph associahedron} of a graph is obtained from a product of simplices
by truncating the faces corresponding to proper connected induced subgraphs
of each connected component of the graph.
In a previous paper \cite{Suyama}, the author proved
that the toric variety associated to the graph associahedron is weak Fano
if and only if every connected component of the graph
does not have a cycle with $\geq 4$ nodes or a diamond graph as a proper induced subgraph.
In particular, a star yields a toric weak Fano variety.
The same conclusion follows from Theorem \ref{main2}.
Manneville and Pilaud showed
that the toric variety associated to the graph associahedron of a star with $n+1$ nodes
is isomorphic to that associated to the graph cubeahedron
of a complete graph with $n$ nodes (see \cite[Example 62 (i)]{MP}).
Theorem \ref{main2} implies that it is weak Fano.
\item If $G$ is a graph obtained by connecting more than two graphs with one node,
then $X(\Delta_{\Box_G})$ is not weak Fano.
\item The toric variety associated to the graph cubeahedron
of the graph in Figure \ref{ex} is weak Fano.
\begin{figure}[htbp]
\begin{center}
\includegraphics[width=5cm]{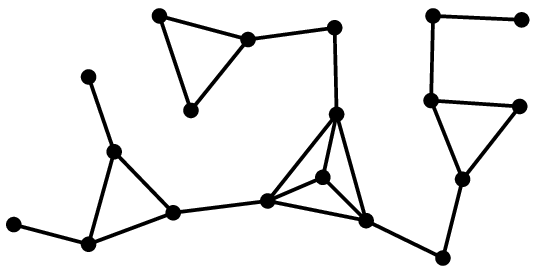}
\caption{an example.}
\label{ex}
\end{center}
\end{figure}
\end{enumerate}
\end{example}

\begin{remark}
Manneville and Pilaud proved
that for connected graphs $G$ and $G'$,
the graph associahedron of $G$ and the graph cubeahedron of $G'$
are combinatorially equivalent
if and only if $G$ is a tree with at most one node of degree more than two
and $G'$ is its line graph (see \cite[Proposition 64]{MP}).
Hence we see that there exist many toric varieties associated to graph cubeahedra
that are not associated to graph associahedra.
\end{remark}

\section{Proofs of main theorems}

First we recall a description of the intersection number of the anticanonical divisor
with a torus-invariant curve, see \cite{Oda} for details.
Let $\Delta$ be a nonsingular complete fan in $\mathbb{R}^n$.
For $r=0, 1, \ldots, n$,
we denote by $\Delta(r)$ the set of $r$-dimensional cones in $\Delta$.
For an $(n-1)$-dimensional cone $\tau$ in $\Delta$,
the intersection number of the anticanonical divisor $-K_{X(\Delta)}$ with
the torus-invariant curve $V(\tau)$ corresponding to $\tau$
can be computed as follows:

\begin{proposition}\label{intersection number}
Let $\Delta$ be a nonsingular complete fan in $\mathbb{R}^n$
and $\tau=\mathbb{R}_{\geq 0}v_1+\cdots+\mathbb{R}_{\geq 0}v_{n-1} \in \Delta(n-1)$,
where $v_1, \ldots, v_{n-1}$ are primitive vectors in $\mathbb{Z}^n$.
Let $v$ and $v'$ be the distinct primitive vectors in $\mathbb{Z}^n$ such that
$\tau+\mathbb{R}_{\geq 0}v$ and $\tau+\mathbb{R}_{\geq 0}v'$ are in $\Delta(n)$.
Then there exist unique integers $a_1, \ldots, a_{n-1}$ such that
$v+v'+a_1v_1+\cdots+a_{n-1}v_{n-1}=0$.
Furthermore, the intersection number $(-K_{X(\Delta)}.V(\tau))$ is equal to
$2+a_1+\cdots+a_{n-1}$.
\end{proposition}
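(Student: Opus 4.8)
The plan is to reduce to the standard local description of a torus-invariant curve in a nonsingular complete toric variety and then read off the intersection number from the adjunction/wall-crossing relation. First I would use completeness and nonsingularity of $\Delta$: since $\tau \in \Delta(n-1)$ is a wall, it is contained in exactly two maximal cones, which I may write as $\tau + \mathbb{R}_{\geq 0}v$ and $\tau + \mathbb{R}_{\geq 0}v'$ for uniquely determined primitive $v, v' \in \mathbb{Z}^n$. Because $\tau + \mathbb{R}_{\geq 0}v$ is nonsingular, $v_1, \ldots, v_{n-1}, v$ form a $\mathbb{Z}$-basis of $\mathbb{Z}^n$; hence $v'$ can be expanded uniquely in this basis, and writing $v' = -v - a_1 v_1 - \cdots - a_{n-1}v_{n-1}$ for integers $a_i$ gives the asserted relation $v + v' + a_1 v_1 + \cdots + a_{n-1}v_{n-1} = 0$. (The coefficient of $v$ in the expansion of $v'$ must be $-1$: since $\tau + \mathbb{R}_{\geq 0}v'$ is a cone meeting $\tau + \mathbb{R}_{\geq 0}v$ exactly along $\tau$, $v'$ lies on the opposite side of the hyperplane spanned by $\tau$, and by symmetry $v_1, \ldots, v_{n-1}, v'$ is also a basis, forcing that coefficient to be a unit of the correct sign.)

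Next I would invoke the classical formula for intersection numbers of torus-invariant cycles, as found in Oda. For a complete nonsingular fan, the torus-invariant curve $V(\tau)$ is itself a nonsingular complete toric curve, hence isomorphic to $\mathbb{P}^1$, and its normal bundle decomposes so that for each ray $\mathbb{R}_{\geq 0}v_i$ of $\tau$ one has $(D_{v_i}.V(\tau)) = -a_i$, where $D_{v_i}$ is the torus-invariant prime divisor corresponding to $v_i$ and the $a_i$ are exactly the coefficients in the wall relation above; moreover $(D_v.V(\tau)) = (D_{v'}.V(\tau)) = 1$ and $(D_w.V(\tau)) = 0$ for every other ray $w$ of $\Delta$. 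This is the standard wall-crossing computation and I would simply cite it.

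Finally I would combine these with the toric description of the anticanonical divisor: $-K_{X(\Delta)} = \sum_{w \in \Delta(1)} D_w$, the sum over all rays of $\Delta$. Intersecting with $V(\tau)$ and using the values just recalled, all terms vanish except those coming from the rays $v, v', v_1, \ldots, v_{n-1}$, giving
\begin{equation*}
(-K_{X(\Delta)}.V(\tau)) = (D_v.V(\tau)) + (D_{v'}.V(\tau)) + \sum_{i=1}^{n-1}(D_{v_i}.V(\tau)) = 1 + 1 + \sum_{i=1}^{n-1}(-a_i).
\end{equation*}
Hmm — this yields $2 - (a_1 + \cdots + a_{n-1})$, so to match the statement I would instead fix the sign convention in the wall relation so that $(D_{v_i}.V(\tau)) = a_i$; the cleanest route is to note that with the relation written as $v + v' + \sum a_i v_i = 0$, applying any linear functional that is the $i$-th coordinate in the dual basis to $v_1,\dots,v_{n-1},v$ recovers $(D_{v_i}.V(\tau)) = a_i$ directly. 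Then the sum telescopes to $2 + a_1 + \cdots + a_{n-1}$, as claimed. The only genuinely delicate point is bookkeeping the sign conventions consistently between the wall relation, the normal bundle splitting, and Oda's formula; once those are pinned down the proof is immediate, so I expect the sign reconciliation to be the main (minor) obstacle rather than any substantive difficulty.
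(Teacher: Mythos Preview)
The paper does not actually prove this proposition: it is stated as a recollection with the pointer ``see \cite{Oda} for details'' and no argument is given. Your sketch is the standard derivation one would find in Oda (or Cox--Little--Schenck), combining the wall relation, the formula $-K_{X(\Delta)}=\sum_{\rho\in\Delta(1)}D_\rho$, and the intersection numbers $(D_w\cdot V(\tau))$ of prime torus-invariant divisors with the invariant curve.

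Your only wobble is the sign, and your second pass is the correct one: with the wall relation written as $v+v'+\sum_{i=1}^{n-1}a_i v_i=0$, one has $(D_v\cdot V(\tau))=(D_{v'}\cdot V(\tau))=1$, $(D_{v_i}\cdot V(\tau))=a_i$, and $(D_w\cdot V(\tau))=0$ for all other rays, giving $2+\sum a_i$ directly. A quick sanity check you could include is $\mathbb{P}^2$: for the wall $\tau=\mathbb{R}_{\geq 0}e_1$ the relation is $e_2+(-e_1-e_2)+1\cdot e_1=0$, so $a_1=1$ and the formula gives $3=(-K_{\mathbb{P}^2}\cdot\text{line})$. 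So there is no substantive gap; just settle the sign convention at the outset rather than mid-proof.
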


\begin{proposition}\label{Nakai}
Let $X(\Delta)$ be an $n$-dimensional nonsingular projective toric variety.
Then the following hold:
\begin{enumerate}
\item $X(\Delta)$ is Fano if and only if
$(-K_{X(\Delta)}.V(\tau))$ is positive for every
$(n-1)$-dimensional cone $\tau$ in $\Delta$ (\cite[Lemma 2.20]{Oda}).
\item $X(\Delta)$ is weak Fano if and only if
$(-K_{X(\Delta)}.V(\tau))$ is nonnegative for every
$(n-1)$-dimensional cone $\tau$ in $\Delta$ (\cite[Proposition 6.17]{Sato}).
\end{enumerate}
\end{proposition}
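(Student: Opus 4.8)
The plan is to derive both statements from the standard combinatorial criteria for ampleness and nefness of torus-invariant divisors on a nonsingular complete toric variety, applied to $-K_{X(\Delta)}$, together with the observation that the anticanonical divisor of a complete toric variety is automatically big.

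First I would invoke the local convexity criterion for support functions (see \cite{Oda}): a torus-invariant Cartier divisor $D$ on $X(\Delta)$ is nef if and only if its $\Delta$-linear support function $h_D$ is (upper) convex, and is ample if and only if $h_D$ is strictly convex; and for a complete fan this can be tested across each wall separately. Concretely, in the notation of Proposition \ref{intersection number}, $h_D$ is convex (resp.\ strictly convex) across the wall $\tau$ exactly when the intersection number $(D.V(\tau))$ is nonnegative (resp.\ positive). Taking $D=-K_{X(\Delta)}$, this shows that $X(\Delta)$ is Fano if and only if $(-K_{X(\Delta)}.V(\tau))>0$ for every $\tau\in\Delta(n-1)$, which is (1), and that $-K_{X(\Delta)}$ is nef if and only if $(-K_{X(\Delta)}.V(\tau))\geq 0$ for every $\tau\in\Delta(n-1)$. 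Alternatively, one may use that the classes of the torus-invariant curves $V(\tau)$ generate the rational polyhedral Mori cone of $X(\Delta)$, so that Kleiman's criterion reduces ampleness and nefness of $D$ to the signs of the numbers $(D.V(\tau))$.

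To finish (2), it remains to note that $-K_{X(\Delta)}$ is always big, so that the weak Fano condition reduces to nefness of $-K_{X(\Delta)}$. Writing $-K_{X(\Delta)}=\sum_{\rho\in\Delta(1)}D_\rho$, its associated polytope is $P=\{m\in M_\mathbb{R}\mid \langle m,v_\rho\rangle\geq -1 \text{ for all }\rho\in\Delta(1)\}$. Since $\Delta$ is complete, the primitive generators $v_\rho$ positively span the ambient space, so $P$ is bounded; and $\langle 0,v_\rho\rangle=0>-1$ for every $\rho$, so the origin lies in the interior of $P$ and $P$ is $n$-dimensional. Hence $h^0(X(\Delta),-mK_{X(\Delta)})=\#(mP\cap M)$ grows like $\mathrm{vol}(P)\,m^n$ with $\mathrm{vol}(P)>0$, i.e.\ $-K_{X(\Delta)}$ is big. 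Combining, $X(\Delta)$ is weak Fano if and only if $-K_{X(\Delta)}$ is nef, if and only if $(-K_{X(\Delta)}.V(\tau))\geq 0$ for every $\tau\in\Delta(n-1)$, which is (2). There is no serious obstacle here: both parts are direct applications of well-known facts (see \cite{Oda}, \cite{Sato}), the only mildly non-obvious input being the automatic bigness of the anticanonical divisor on a complete toric variety.
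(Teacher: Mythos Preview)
Your sketch is correct. Note, however, that the paper does not actually prove this proposition: it simply records the two statements with citations to \cite[Lemma~2.20]{Oda} and \cite[Proposition~6.17]{Sato} and uses them as black boxes. So there is no ``paper's own proof'' to compare against; what you have written is a correct outline of why those cited results hold, and it matches the standard arguments one finds in those references (support-function convexity for ampleness/nefness, together with the observation that the anticanonical polytope of a complete toric variety contains the origin in its interior and hence has positive volume, giving bigness).
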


We are now ready to prove Theorem \ref{main1}.

\begin{proof}[Proof of Theorem \ref{main1}]
$(1) \Rightarrow (2)$:
Suppose that $V(G)=\{1, \ldots, n\}$ and
$G$ has a connected component with $\geq 3$ nodes.
We may assume that $G|_{\{1, \ldots, k\}}$ is a connected component of $G$ with $k \geq 3$
and $G|_{\{1, \ldots, i\}}$ is connected for every $i=1, \ldots, k$.
We may further assume that $G|_{\{2, 3\}}$ is connected.
We consider
\begin{equation*}
N=\{\{2\}, \{1, 2, 3\}, \{1, 2, 3, 4\}, \ldots, \{1, \ldots, k\},
\{\overline{k+1}\}, \{\overline{k+2}\}, \ldots, \{\overline{n}\}\} \in \mathcal{N}^\Box(G).
\end{equation*}
Then $N \cup \{\{1, 2\}\}$ and $N \cup \{\{2, 3\}\}$
are maximal (by inclusion) elements of $\mathcal{N}^\Box(G)$. Since
\begin{equation*}
e_{\{1, 2\}}+e_{\{2, 3\}}-e_{\{2\}}-e_{\{1, 2, 3\}}=0,
\end{equation*}
Proposition \ref{intersection number} gives $(-K_{X(\Delta_{\Box_G})}.V(\sigma_N))=2-2=0$.
Therefore $X(\Delta_{\Box_G})$ is not Fano by Proposition \ref{Nakai} (1).

$(2) \Rightarrow (1)$:
If $G_1, \ldots, G_m$ are the connected components of $G$,
then $X(\Delta_{\Box_G})$ is isomorphic to
$X(\Delta_{\Box_{G_1}}) \times \cdots \times X(\Delta_{\Box_{G_m}})$.
Since the product of nonsingular projective toric varieties is Fano
if and only if every factor is Fano,
it suffices to show that $X(\Delta_{\Box_G})$ is Fano
if $G$ is connected and $|V(G)| \leq 2$.
If $G$ is a singleton graph, then $X(\Delta_{\Box_G})=\mathbb{P}^1$, which is Fano.
If $G$ is a path with two nodes,
then $X(\Delta_{\Box_G})$ is $\mathbb{P}^1 \times \mathbb{P}^1$ blown-up at one point,
which is Fano (see Example \ref{p2}).
This completes the proof of Theorem \ref{main1}.
\end{proof}

We prepare two lemmas for the proof of Theorem \ref{main2}.
Since $\Box_G$ is a flag polytope, we obtain the following lemma.

\begin{lemma}\label{link}
Let $G$ be a finite simple graph on $V(G)=\{1, \ldots, n\}$
and let $N \in \mathcal{N}^\Box(G)$ with $|N|=n-1$.
Then there exists a pair $\{J, J'\} \subset (\mathcal{I}_G
\cup \{\{\overline{1}\}, \ldots, \{\overline{n}\}\}) \setminus N$ such that
$N \cup \{J\}$ and $N \cup \{J'\}$ are distinct maximal elements of $\mathcal{N}^\Box(G)$.
Furthermore, $\{J, J'\}$ is not in $\mathcal{N}^\Box(G)$.
\end{lemma}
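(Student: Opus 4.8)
The plan is to exploit the correspondence between $(n-1)$-dimensional cones of a complete nonsingular fan and ``walls'' between maximal cones, together with the flag property of $\Box_G$ recorded in the theorem of \cite{DHV}. Since $\Delta_{\Box_G}$ is the normal fan of the Delzant polytope $\Box_G$, it is a complete nonsingular fan in $\mathbb{R}^n$, so every cone $\sigma_N$ with $|N|=n-1$ is a facet of exactly two maximal ($n$-dimensional) cones. Each maximal cone of $\Delta_{\Box_G}$ has the form $\sigma_M$ for a maximal element $M$ of $\mathcal{N}^\Box(G)$, and the face relation $\sigma_N \subset \sigma_M$ is equivalent to $N \subset M$ because the generators $e_I$ are precisely the primitive ray generators and distinct rays give distinct generators. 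Hence the two maximal cones containing $\sigma_N$ correspond to two distinct maximal elements of $\mathcal{N}^\Box(G)$ that both contain $N$; since $|N| = n-1$ and maximal elements of $\mathcal{N}^\Box(G)$ have size $n$ (as $\dim \Box_G = n$ and the fan is complete and simplicial), each of these maximal elements is $N \cup \{J\}$ and $N \cup \{J'\}$ for some $J, J' \notin N$, and $J \neq J'$ since the two maximal cones are distinct. This gives the first assertion.

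For the second assertion, I would argue by contradiction: suppose $\{J, J'\} \in \mathcal{N}^\Box(G)$, i.e.\ the facets $F_J$ and $F_{J'}$ of $\Box_G$ intersect. Every element of $N$ is compatible with both $J$ and $J'$ (because $N\cup\{J\}$ and $N\cup\{J'\}$ lie in $\mathcal{N}^\Box(G)$), and $F_J$, $F_{J'}$ intersect by hypothesis, so the collection $N \cup \{J, J'\}$ consists of pairwise intersecting facets. Now invoke the flag property of $\Box_G$ from the cited theorem: a collection of pairwise intersecting facets has nonempty common intersection, so $N \cup \{J, J'\} \in \mathcal{N}^\Box(G)$. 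But then $\sigma_{N \cup \{J, J'\}}$ would be an $(n+1)$-dimensional cone in the fan $\Delta_{\Box_G}$, which is impossible since the fan lives in $\mathbb{R}^n$. This contradiction shows $\{J, J'\} \notin \mathcal{N}^\Box(G)$.

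The main obstacle, such as it is, is purely bookkeeping: making sure that all maximal elements of $\mathcal{N}^\Box(G)$ genuinely have cardinality $n$ (equivalently, that $\Delta_{\Box_G}$ is simplicial, which holds because $\Box_G$ is a simple polytope and its normal fan is the normal fan of a Delzant polytope, hence nonsingular and in particular simplicial), and that the map $N \mapsto \sigma_N$ is inclusion-preserving and injective on rays. Once these structural facts are in place, both statements are immediate from completeness of the fan and the flag property; no computation with the explicit normal vectors $e_I$ is needed. I would present the argument in exactly the two steps above, first identifying $J$ and $J'$ via the wall structure and then ruling out $\{J,J'\} \in \mathcal{N}^\Box(G)$ via flagness and dimension.
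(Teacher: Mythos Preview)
Your argument is correct and is precisely what the paper has in mind: the paper states the lemma with only the remark ``Since $\Box_G$ is a flag polytope, we obtain the following lemma,'' and you have spelled out this one-line justification using completeness of the normal fan (for the existence of $J$ and $J'$) and flagness together with simpliciality (for $\{J,J'\}\notin\mathcal{N}^\Box(G)$). The only cosmetic fix: rather than calling $\sigma_{N\cup\{J,J'\}}$ an $(n+1)$-dimensional cone in $\mathbb{R}^n$, phrase the contradiction as $n+1$ facets of a simple $n$-polytope having nonempty common intersection, or equivalently as a simplicial cone in a nonsingular fan having $n+1$ distinct rays.
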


The proof of the following lemma
is the same as a part of the proof of \cite[Theorem 3.4]{Suyama},
but we describe it for the reader's convenience.

\begin{lemma}\label{cyclediamond}
Let $G$ be a finite simple graph on $V(G)=\{1, \ldots, n\}$
and let $J, J' \in \mathcal{I}_G$ such that $J \cap J' \ne \emptyset$
and $G|_{J \cap J'}$ is not connected.
Then there exists $I \subset V(G)$ such that
$G|_I$ is a cycle with $\geq 4$ nodes or a diamond graph.
\end{lemma}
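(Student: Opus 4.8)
The plan is to build a cycle in $G$ out of induced paths in $G|_J$ and in $G|_{J'}$, and then to extract from it either an induced cycle with at least four nodes or an induced diamond.

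First I would choose nodes $a,b\in J\cap J'$ lying in different connected components of $G|_{J\cap J'}$, and among all such pairs pick one for which $\mathrm{dist}_{G|_J}(a,b)$ is as small as possible; such a pair exists because $G|_{J\cap J'}$ is disconnected, and the distance is finite because $G|_J$ is connected. Let $P\colon a=x_0,x_1,\dots,x_k=b$ be a shortest path in $G|_J$. Being a shortest path, $P$ is induced; since $a$ and $b$ lie in different components of $G|_{J\cap J'}$ they are non-adjacent, so $k\geq 2$; and the minimality of $(a,b)$ forces $x_1,\dots,x_{k-1}\in J\setminus J'$, because an interior node lying in $J\cap J'$ would, together with $a$ or with $b$, form a valid pair at strictly smaller $G|_J$-distance. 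Next let $Q\colon a=y_0,y_1,\dots,y_l=b$ be a shortest, hence induced, path in $G|_{J'}$, so $l\geq 2$. Since the interior nodes of $P$ lie in $J\setminus J'$ while $V(Q)\subseteq J'$, the paths $P$ and $Q$ meet only at $a$ and $b$; hence $C:=P\cup Q$ is a cycle of length $k+l\geq 4$, and I set $I_0:=V(C)$.

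Next I would describe the chords of $C$ in $G|_{I_0}$. Because $P$ and $Q$ are induced and $a\not\sim b$, every chord of $C$ joins an interior node $x_i$ of $P$ to an interior node $y_j$ of $Q$. If there is no such chord, then $G|_{I_0}=C$ is an induced cycle with at least four nodes and we are done. Otherwise, among the chords choose one, $\{x_i,y_j\}$, with $i+j$ minimal. Then the cycle $C_1\colon a,x_1,\dots,x_i,y_j,\dots,y_1,a$ is chordless: any chord of it would be an edge $\{x_s,y_t\}$ with $s\leq i$ and $t\leq j$, and minimality of $i+j$ then forces $s=i$, $t=j$. Thus $G|_{V(C_1)}$ is an induced cycle with $i+j+1$ nodes, and if $i+j+1\geq 4$ we are done.

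The surviving case is $i=j=1$, so that $\{a,x_1,y_1\}$ is a triangle, and dealing with it is the core of the proof. Running the same extraction at the $b$-end of $C$ either finishes the argument or shows that $\{b,x_{k-1},y_{l-1}\}$ is likewise a triangle. If $k=l=2$, then $I_0=\{a,x_1,b,y_1\}$ and $G|_{I_0}$ has precisely the five edges of $K_4$ with $\{a,b\}$ deleted, i.e.\ it is a diamond. If instead $k\geq 3$ (the case $l\geq 3$, $k=2$ being symmetric under swapping $J$ and $J'$), I would pass to the strictly shorter cycle $C'\colon x_1,x_2,\dots,x_{k-1},b,y_{l-1},\dots,y_1,x_1$, whose closing edge $\{x_1,y_1\}$ is available in the triangle case and which again is the union of two induced paths meeting only at two non-adjacent nodes, so the same chord analysis applies. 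Each step of this descent either exhibits an induced cycle with at least four nodes, or completes a triangle into a diamond by adjoining a suitable node of $G$ (for example, if $x_2\sim y_1$ then $\{a,x_1,x_2,y_1\}$ induces a diamond), or replaces the cycle by a strictly shorter one; since the cycle length strictly decreases, the process terminates. I expect this triangle case — carrying out the descent while keeping the case distinctions on $k$ and $l$ under control — to be the main obstacle, whereas the construction of $C$ and the first chord extraction are routine.
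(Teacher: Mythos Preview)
Your construction of the cycle $C$ is correct and in fact cleaner than the paper's: choosing $(a,b)$ to minimise $\mathrm{dist}_{G|_J}(a,b)$ and taking shortest paths gives at once that $P$ and $Q$ are induced and that the interior of $P$ lies in $J\setminus J'$, so $P\cap Q=\{a,b\}$. The paper instead takes arbitrary simple paths, then needs auxiliary indices $p,q$ and several ``we may further assume'' reductions to reach the same configuration (a cycle with two chordless arcs and non-adjacent endpoints). Your minimal-sum chord extraction of $C_1$ near $a$ (and symmetrically near $b$) is also correct.

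The gap is in the descent. You assert that $C'=x_1,x_2,\dots,x_{k-1},b,y_{l-1},\dots,y_1,x_1$ ``again is the union of two induced paths meeting only at two non-adjacent nodes'', but the arc $x_1,y_1,\dots,y_{l-1},b$ need not be induced: nothing so far rules out $x_1\sim y_j$ for some $j\geq 2$, and such an edge runs from an endpoint of $C'$ into the interior of the opposite arc, which is exactly the situation your interior-to-interior chord analysis cannot handle. Your parenthetical example $x_2\sim y_1$ is a different edge and does not cover this. The repair is easy and should be made explicit before recursing: if $j_0=\min\{j\geq 2: x_1\sim y_j\}$ exists, then $j_0=2$ yields the diamond on $\{a,x_1,y_1,y_2\}$ (the only non-edge being $ay_2$, since $Q$ is induced), while $j_0\geq 3$ yields the induced cycle $x_1,y_1,\dots,y_{j_0},x_1$ of length $j_0+1\geq 4$. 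Once this case is excluded, $C'$ genuinely inherits the two-induced-arcs structure and your descent goes through. For comparison, the paper avoids recursion entirely: having reached the same cycle, it makes a single explicit case split on whether the two neighbours of one endpoint are adjacent (your ``triangle'' case) and then reads off the desired induced subgraph via extremal indices $i_{\min},j_{\max}$.
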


\begin{proof}
Let $G|_{I_1}, \ldots, G|_{I_m}$ be the connected components of $G|_{J \cap J'}$.
We pick $x \in I_1, x' \in I_2 \cup \cdots \cup I_m$
and take simple paths $x=y_1, y_2, \ldots, y_r=x'$ in $G|_J$
and $x=z_1, z_2, \ldots, z_s=x'$ in $G|_{J'}$. Let
\begin{align*}
p&=\mathrm{max}\{i \in \{1, \ldots, r\} \mid y_i \in I_1,
y_i=z_j \mbox{ for some } j \in \{1, \ldots, s\}\},\\
q&=\mathrm{min}\{i \in \{p+1, \ldots, r\} \mid y_i \in I_2 \cup \cdots \cup I_m,
y_i=z_j \mbox{ for some } j \in \{1, \ldots, s\}\}.
\end{align*}
Then we have two simple paths between $y_p$ and $y_q$.
The two paths have no common nodes except $y_p$ and $y_q$.
Since $y_p \in I_1$ and $y_q \in I_2 \cup \cdots \cup I_m$,
we have $\{y_p, y_q\} \notin E(G)$ and the number of edges of
each path is greater than or equal to two.
Thus we obtain a simple cycle of length $\geq 4$ containing $y_p$ and $y_q$.
Hence we may assume that there exist integers $k$ and $l$
with $3 \leq k<l \leq n$
such that $\{1, 2\}, \{2, 3\}, \ldots, \{l-1, l\}, \{l, 1\} \in E(G)$
and $\{1, k\} \notin E(G)$.
We may further assume that $\{i, j\} \notin E(G)$ for every
\begin{itemize}
\item $1 \leq i<j \leq k$ where $j-i \geq 2$,
\item $k \leq i<j \leq l$ where $j-i \geq 2$,
\item $k \leq i \leq l-1$ and $j=1$,
\end{itemize}
since if such an edge exists,
then we can replace the cycle by a shorter cycle containing the edge.
We find $I \subset V(G)$ such that
$G|_I$ is a cycle with $\geq 4$ nodes or a diamond graph as follows:

{\it The case where $\{2, l\} \notin E(G)$}. We consider
\begin{align*}
i_\mathrm{min}&
=\mathrm{min}\{i \in \{2, \ldots, k\} \mid
\{i, j\} \in E(G) \mbox{ for some } j \in \{k+1, \ldots, l\}\},\\
j_\mathrm{max}&
=\mathrm{max}\{j \in \{k+1, \ldots, l\} \mid \{i_\mathrm{min}, j\} \in E(G)\}.
\end{align*}
Then the induced subgraph by the subset
\begin{equation*}
\{1, 2, \ldots, i_\mathrm{min}, j_\mathrm{max}, j_\mathrm{max}+1, \ldots, l\} \subset V(G)
\end{equation*}
is a cycle with $\geq 4$ nodes (see Figure \ref{cycle2}).
\begin{figure}[htbp]
\begin{center}
\includegraphics[width=12cm]{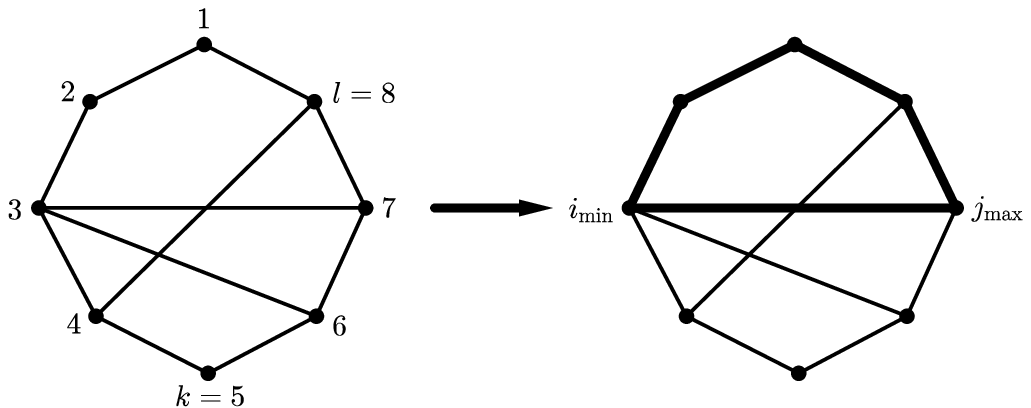}
\caption{a cycle as an induced subgraph.}
\label{cycle2}
\end{center}
\end{figure}

{\it The case where $\{2, l\} \in E(G)$}.
Suppose that there exists an integer $j$
such that $k+1 \leq j \leq l-1$ and $\{2, j\} \in E(G)$. We consider
\begin{equation*}
j_\mathrm{max}=\mathrm{max}\{j \in \{k+1, \ldots, l-1\} \mid \{2, j\} \in E(G)\}.
\end{equation*}
If $j_\mathrm{max}=l-1$, then the induced subgraph by the subset
$\{1, 2, l-1, l\}$ is a diamond graph.
If $j_\mathrm{max} \leq l-2$, then the induced subgraph by the subset
\begin{equation*}
\{2, j_\mathrm{max}, j_\mathrm{max}+1, \ldots, l\} \subset V(G)
\end{equation*}
is a cycle with $\geq 4$ nodes.
Suppose that $\{2, j\} \notin E(G)$ for any $k+1 \leq j \leq l-1$.
We consider
\begin{align*}
i_\mathrm{min}&
=\mathrm{min}\{i \in \{3, \ldots, k\} \mid
\{i, j\} \in E(G) \mbox{ for some } j \in \{k+1, \ldots, l\}\},\\
j_\mathrm{max}&
=\mathrm{max}\{j \in \{k+1, \ldots, l\} \mid \{i_\mathrm{min}, j\} \in E(G)\}.
\end{align*}
If $i_\mathrm{min}=3$ and $j_\mathrm{max}=l$,
then the induced subgraph by the subset $\{1, 2, 3, l\}$ is a diamond graph.
Otherwise, the induced subgraph by the subset
\begin{equation*}
\{2, 3, \ldots, i_\mathrm{min}, j_\mathrm{max}, j_\mathrm{max}+1, \ldots, l\} \subset V(G)
\end{equation*}
is a cycle with $\geq 4$ nodes.

Thus we obtain $I \subset V(G)$ such that
$G|_I$ is a cycle with $\geq 4$ nodes or a diamond graph.
This completes the proof.
\end{proof}

\begin{proof}[Proof of Theorem \ref{main2}]
We assume $V(G)=\{1, \ldots, n\}$.

$(1) \Rightarrow (2)$:
(i) Suppose that there exists $I \subset V(G)$
such that $G|_I$ is a cycle with $\geq 4$ nodes.
We may assume that $I=\{1, \ldots, k\}, k \geq 4$ and
\begin{equation*}
E(G|_I)=\{\{1, 2\}, \{2, 3\}, \ldots, \{k-1, k\}, \{k, 1\}\}.
\end{equation*}
We consider
\begin{equation*}
N=\{\{1\}, \{1, 2\}, \ldots, \{1, \ldots, k-3\}, \{k-1\}, \{1, \ldots, k\},
\{\overline{k+1}\}, \ldots, \{\overline{n}\}\} \in \mathcal{N}^\Box(G).
\end{equation*}
Then the pair in Lemma \ref{link} is
$J=\{1, \ldots, k-1\}$ and $J'=\{1, \ldots, k-3, k-1, k\}$. Since
\begin{equation*}
e_J+e_{J'}-e_{\{1, \ldots, k-3\}}-e_{\{k-1\}}-e_{\{1, \ldots, k\}}=0,
\end{equation*}
Proposition \ref{intersection number} gives
$(-K_{X(\Delta_{\Box_G})}.V(\sigma_N))=2-3=-1$.
Therefore $X(\Delta_{\Box_G})$ is not weak Fano by Proposition \ref{Nakai} (2).

(ii) Suppose that there exists $I \subset V(G)$
such that $G|_I$ is a diamond graph.
We may assume that $I=\{1, 2, 3, 4\}$ and
\begin{equation*}
E(G|_{\{1, 2, 3, 4\}})=\{\{1, 2\}, \{1, 3\}, \{1, 4\}, \{2, 3\}, \{2, 4\}\}.
\end{equation*}
We consider
\begin{equation*}
N=\{\{3\}, \{4\}, \{1, 2, 3, 4\},
\{\overline{5}\}, \ldots, \{\overline{n}\}\} \in \mathcal{N}^\Box(G).
\end{equation*}
Then the pair in Lemma \ref{link} is
$J=\{1, 3, 4\}$ and $J'=\{2, 3, 4\}$. Since
\begin{equation*}
e_J+e_{J'}-e_{\{3\}}-e_{\{4\}}-e_{\{1, 2, 3, 4\}}=0,
\end{equation*}
Proposition \ref{intersection number} gives
$(-K_{X(\Delta_{\Box_G})}.V(\sigma_N))=2-3=-1$.
Therefore $X(\Delta_{\Box_G})$ is not weak Fano by Proposition \ref{Nakai} (2).

(iii) Suppose that there exists $I \subset V(G)$
such that $G|_I$ is a claw.
We may assume that $I=\{1, 2, 3, 4\}$ and
\begin{equation*}
E(G|_{\{1, 2, 3, 4\}})=\{\{1, 2\}, \{1, 3\}, \{1, 4\}\}.
\end{equation*}
We consider
\begin{equation*}
N=\{\{2\}, \{3\}, \{4\},
\{\overline{5}\}, \ldots, \{\overline{n}\}\} \in \mathcal{N}^\Box(G).
\end{equation*}
Then the pair in Lemma \ref{link} is
$J=\{1, 2, 3, 4\}$ and $J'=\{\overline{1}\}$. Since
\begin{equation*}
e_J+e_{J'}-e_{\{2\}}-e_{\{3\}}-e_{\{4\}}=0,
\end{equation*}
Proposition \ref{intersection number} gives
$(-K_{X(\Delta_{\Box_G})}.V(\sigma_N))=2-3=-1$.
Therefore $X(\Delta_{\Box_G})$ is not weak Fano by Proposition \ref{Nakai} (2).

$(2) \Rightarrow (1)$:
Suppose that $X(\Delta_{\Box_G})$ is not weak Fano.
By Proposition \ref{Nakai} (2), there exists $N \in \mathcal{N}^\Box(G)$
such that $|N|=n-1$ and $(-K_{X(\Delta_{\Box_G})}.V(\sigma_N)) \leq -1$.
By Lemma \ref{link},
there exists a pair $\{J, J'\} \subset (\mathcal{I}_G
\cup \{\{\overline{1}\}, \ldots, \{\overline{n}\}\}) \setminus N$ such that
$N \cup \{J\}$ and $N \cup \{J'\}$ are distinct maximal elements of $\mathcal{N}^\Box(G)$.
Furthermore, $\{J, J'\}$ is not in $\mathcal{N}^\Box(G)$.
Thus we must have $J \in \mathcal{I}_G$ or $J' \in \mathcal{I}_G$.
We may assume $J \in \mathcal{I}_G$.

{\it The case where $J' \in \mathcal{I}_G$}.
We have $J \not\subset J', J' \not\subset J$ and $J \cup J' \in \mathcal{I}_G$.
We will show that $J \cap J' \ne \emptyset$
and $\{J \cup J'\} \cup (\mathcal{I}_{G|_{J \cap J'}})_{\rm max} \subset N$,
where $(\mathcal{I}_{G|_{J \cap J'}})_{\rm max}$
is the set of maximal (by inclusion) elements of $\mathcal{I}_{G|_{J \cap J'}}$.

We show $N \cup \{J, J \cup J'\} \in \mathcal{N}^\Box(G)$,
which implies $J \cup J' \in N$.
Since $\Box_G$ is flag, it suffices to show that the two facets
$F_I$ and $F_{J \cup J'}$ intersect for every $I \in N$.
Suppose $\{\overline{i}\} \in N \cap \{\{\overline{1}\}, \ldots, \{\overline{n}\}\}$.
Since $\{\{\overline{i}\}, J\}, \{\{\overline{i}\}, J'\} \in \mathcal{N}^\Box(G)$,
we have $i \notin J$ and $i \notin J'$, so $i \notin J \cup J'$.
Thus $F_{\{\overline{i}\}}$ and $F_{J \cup J'}$ intersect.
It remains to show that $F_I$ and $F_{J \cup J'}$ intersect
for every $I \in N \cap \mathcal{I}_G$.
Since $\{I, J\}, \{I, J'\} \in \mathcal{N}^\Box(G)$,
we see that $I$ falls into the following three cases:
\begin{itemize}
\item $I \subset J$ or $I \subset J'$. Then $I \subset J \cup J'$.
\item $I \cup J \notin \mathcal{I}_G$ or $I \cup J' \notin \mathcal{I}_G$.
We may assume $I \cup J \notin \mathcal{I}_G$.
If $J' \subset I$, then $I \cup J=(I \cup J') \cup J=I \cup (J \cup J') \in \mathcal{I}_G$,
which is a contradiction. Thus $I \subset J'$ or $I \cup J' \notin \mathcal{I}_G$.
If $I \subset J'$, then $I \subset J \cup J'$.
Suppose $I \cup J' \notin \mathcal{I}_G$.
If $I \cup (J \cup J') \in \mathcal{I}_G$,
then at least one of $G|_{I \cup J}$ and $G|_{I \cup J'}$ is connected,
which is a contradiction.
Thus $I \cup (J \cup J') \notin \mathcal{I}_G$.
\item $J \subset I$ and $J' \subset I$. Then $J \cup J' \subset I$.
\end{itemize}
In every case, $F_I$ and $F_{J \cup J'}$ intersect.
Hence $N \cup \{J, J \cup J'\} \in \mathcal{N}^\Box(G)$.
Since $|N \cup \{J, J \cup J'\}| \leq n$, we must have $J \cup J' \in N$.

If $J \cap J'=\emptyset$,
then we have $e_J+e_{J'}-e_{J \cup J'}=0$
and Proposition \ref{intersection number} gives
$(-K_{X(\Delta_{\Box_G})}.V(\sigma_N))=2-1=1$, which is a contradiction.
Hence $J \cap J' \ne \emptyset$.
Let $C \in (\mathcal{I}_{G|_{J \cap J'}})_{\rm max}$.
We show $N \cup \{J, C\} \in \mathcal{N}^\Box(G)$,
which implies $C \in N$.
It suffices to show that
$F_I$ and $F_C$ intersect for every $I \in N$.
Suppose $\{\overline{i}\} \in N \cap \{\{\overline{1}\}, \ldots, \{\overline{n}\}\}$.
Since $\{\{\overline{i}\}, J\} \in \mathcal{N}^\Box(G)$,
we have $i \notin J$, so $i \notin C$.
Thus $F_{\{\overline{i}\}}$ and $F_C$ intersect.
It remains to show that $F_I$ and $F_C$ intersect
for every $I \in N \cap \mathcal{I}_G$.
Since $\{I, J\}, \{I, J'\} \in \mathcal{N}^\Box(G)$,
we see that $I$ falls into the following three cases:
\begin{itemize}
\item $J \subset I$ or $J' \subset I$. Then $C \subset J \cap J' \subset I$.
\item $I \cup J \notin \mathcal{I}_G$ or $I \cup J' \notin \mathcal{I}_G$.
If $I \cup C \in \mathcal{I}_G$,
then $I \cup J=I \cup(C \cup J)=(I \cup C)\cup J \in \mathcal{I}_G$
and $I \cup J'=I \cup(C \cup J')=(I \cup C)\cup J' \in \mathcal{I}_G$,
which is a contradiction.
Thus $I \cup C \notin \mathcal{I}_G$.
\item $I \subset J$ and $I \subset J'$. Then $I \in \mathcal{I}_{G|_{J \cap J'}}$.
Since $C \in (\mathcal{I}_{G|_{J \cap J'}})_{\rm max}$,
we have $I \subset C$ or $I \cap C=\emptyset$.
If $I \cap C=\emptyset$,
then $C \subsetneq I \cup C \subset J \cap J'$, so $I \cup C \notin \mathcal{I}_G$.
\end{itemize}
In every case, $F_I$ and $F_C$ intersect.
Hence $N \cup \{J, C\} \in \mathcal{N}^\Box(G)$.
Since $|N \cup \{J, C\}| \leq n$, we must have $C \in N$.
Therefore $\{J \cup J'\} \cup (\mathcal{I}_{G|_{J \cap J'}})_{\rm max} \subset N$.

We see that
\begin{equation*}
e_J+e_{J'}-\sum_{C \in (\mathcal{I}_{G|_{J \cap J'}})_{\rm max}}e_C-e_{J \cup J'}=0.
\end{equation*}
Proposition \ref{intersection number} gives
$-1 \geq (-K_{X(\Delta_{\Box_G})}.V(\sigma_N))
=2-|(\mathcal{I}_{G|_{J \cap J'}})_{\rm max}|-1
=1-|(\mathcal{I}_{G|_{J \cap J'}})_{\rm max}|$.
Thus $|(\mathcal{I}_{G|_{J \cap J'}})_{\rm max}| \geq 2$,
that is, $G|_{J \cap J'}$ is not connected.
By Lemma \ref{cyclediamond}, there exists $I \subset V(G)$ such that
$G|_I$ is a cycle with $\geq 4$ nodes or a diamond graph.

{\it The case where $J'=\{\overline{j}\}$ for $j \in \{1, \ldots, n\}$}.
Since $\{J, J'\} \notin \mathcal{N}^\Box(G)$, we have $j \in J$.
By Proposition \ref{intersection number}, we have the relation
\begin{equation*}
e_J+e_{J'}+\sum_{I \in N}a_Ie_I=0, \quad a_I \in \mathbb{Z}.
\end{equation*}
Let $N'=\{I \in N \cap \mathcal{I}_G \mid I \subset J \setminus \{j\}\}$
and let $I_1, \ldots, I_r$ be the maximal elements of $N'$.
Since $N \cup \{J\}, N \cup \{J'\} \in \mathcal{N}^\Box(G)$,
we have $I \subset J \setminus \{j\}$ or $I \cap J=\emptyset$
for every $I \in N \cap \mathcal{I}_G$,
and $i \notin J$ for every
$\{\overline{i}\} \in N \cap \{\{\overline{1}\}, \ldots, \{\overline{n}\}\}$.
The relation above implies that for each $k \in J \setminus \{j\}$,
there exists $I \in N'$ such that $I \ni k$ and $a_I<0$.
Hence we have $J \setminus \{j\}=I_1 \cup \cdots \cup I_r$.
Since $I_1, \ldots, I_r$ are pairwise disjoint, the relation above is
\begin{equation*}
e_J+e_{J'}-e_{I_1}-\cdots-e_{I_r}=0.
\end{equation*}
Proposition \ref{intersection number} gives
$-1 \geq (-K_{X(\Delta_{\Box_G})}.V(\sigma_N))=2-r$, so $r \geq 3$.

We show that $G|_{I_p \cup \{j\}}$ is connected for every $p=1, \ldots, r$.
We pick $x \in I_p$ and take a simple path $x=x_1, \ldots, x_s=j$ in $G|_J$.
Then $x_i \notin I_p$ for some $i \in \{1, \ldots, s\}$.
Let $m=\mathrm{min}\{i \in \{1, \ldots, s\} \mid x_i \notin I_p\}$.
If $x_m \in I_q$ for some $q \ne p$, then $I_p \cup I_q \in \mathcal{I}_G$,
which contradicts that $\{I_p, I_q\} \subset N \in \mathcal{N}^\Box(G)$.
Thus $x_m$ must be $j$.
Since $x_1, \ldots, x_s$ is a simple path, we must have $m=s$.
Hence $\{x_{s-1}, j\} \in E(G)$. We put $y_p=x_{s-1}$.
Since $\{y_p, y_q\} \notin E(G)$ for $p \ne q$,
the induced subgraph $G|_{\{j, y_1, y_2, y_3\}}$ is a claw.

In every case, we obtain a desired induced subgraph.
This completes the proof of Theorem \ref{main2}.
\end{proof}

\end{document}